            \OR\ifentrytype{incollection}\OR\ifentrytype{inproceedings}}
\providecommand{\FF}{\mathbb{F}}
\providecommand{\RR}{\mathbb{R}}
\providecommand{\ZZ}{\mathbb{Z}}
\providecommand{\calF}{\mathcal{F}}
\providecommand{\calB}{\mathcal{B}}
\DeclareMathOperator\arctanh{arctanh}
\newcommand{\PrSymbol}{\mbox{\sffamily\upshape\bfseries P}}
\renewcommand{\Pr}[2][]{\PrSymbol_{#1}\left\{#2\right\}}
\renewcommand{\vec}[1]{{\bf #1}}
\newcommand{\hatup}[1]{\accentset{\wedge}{#1}}
\newcommand{\boxup}[1]{\accentset{\square}{#1}}
\theoremstyle{plain}
\newtheorem{proposition}{Proposition}
\theoremstyle{plain}
\newtheorem{example}{Example}
\begin{document}
\begin{sloppypar}

\title{An upper bound on binomial coefficients 
in~the~de~Moivre--Laplace form\footnote{
Appeared in {\it Journal of the Belarusian State University.
Mathematics and Informatics}, 2022; 1:66--74 (in~Russian).}
}

\author{Sergey~Agievich\\
\small Research Institute for Applied Problems of Mathematics and 
Informatics,\\[-0.6ex] 
\small Belarusian State University, Minsk, Belarus\\[-0.6ex]
\small \url{agievich@{bsu.by,gmail.com}}
}

\date{}

\maketitle

\begin{abstract}
We suggest an upper bound on binomial coefficients that holds over the 
entire parameter range and whose form repeats the form of the de Moivre--Laplace 
approximation of the symmetric binomial distribution.
Using the bound, we estimate the number of continuations of a given Boolean 
function to bent functions, investigate dependencies into the Walsh--Hadamard 
spectra, obtain restrictions on the number of representations as sum of 
squares of integers bounded in magnitude.
\end{abstract}

\noindent
{\small
{\bf Keywords}: binomial coefficient, de Moivre--Laplace theorem,
Walsh--Hadamard spectrum, bent function, sum of squares representation.
}

\section{Results}\label{PRELIM}

The de Moivre--Laplace theorem for the symmetric binomial distribution 
can be written as the following bound on the binomial coefficients:
$$
\binom{n}{k}=\frac{2^n}{\sqrt{\pi n/2}}\exp\left(-\frac{2(k-n/2)^2}{n}
\right)(1+O(1/\sqrt{n})).
$$
The bound holds for $n\to\infty$ and $|k-n/2|=O(\sqrt{n})$, 
that is, in the so-called central region.

The fact that the bound is asymptotic and valid only in the 
central region makes it difficult to apply in certain cases 
(some of them are considered in this paper).
There are known non-asymptotic bounds that hold for wider ranges of parameters.
For example,
\begin{align*}
\left(\frac{n}{k}\right)^k 
&\leq \binom{n}{k} 
\leq\left(\frac{en}{k}\right)^k,\quad
1\leq k\leq n,\\
\intertext{or, denoting $H_2(x)=-x\log_2 x - (1-x)\log_2(1-x)$,}
\frac{2^{n H_2(k/n)}}{\sqrt{8 k (1-k/n)}}
&\leq \binom{n}{k} 
\leq \frac{2^{n H_2(k/n)}}{\sqrt{2\pi k (1-k/n)}},\quad
1\leq k\leq n-1
\end{align*}
(see~\cite{Odl95} and \cite[Chapter~10, Lemma~7]{McWSlo78} respectively).
However, these bounds are either not accurate enough or their forms are not 
convenient enough. 

We suggest an upper bound on the binomial coefficients that holds over the 
entire parameter range and preserves the de Moivre--Laplace form.

\begin{theorem*}\label{Th.PRELIM.1}
For a positive integer~$n$ and $k\in\{0,1,\ldots,n\}$, it holds that
$$
\binom{n}{k}\leq \frac{2^n}{\sqrt{\pi n/2}}
\exp\left(-\frac{2(k-n/2)^2}{n}+\frac{23}{18n}\right).
$$
\end{theorem*}

When deriving the bound, we follow the approach of the paper~\cite{Sza16} which 
in turn extends previous lines of research as explained in the paper.

In Sections~\ref{BENT}~--- \ref{SQUARES}, we discuss the use of the proposed bound.
More specifically, in Section~\ref{BENT} we estimate the number of 
continuations of a given Boolean function to bent functions,
in Section~\ref{LATIN} we investigate dependencies into the Walsh--Hadamard 
spectra,
in Section~\ref{SQUARES} we obtain restrictions on the number of 
representations as sum of squares of integers bounded in magnitude.
Section~\ref{PROOF} contains the proof of Theorem.

\section{Continuations to bent functions}\label{BENT}

Let $\FF_2$ be the field of 2 elements ($0$ and $1$), $\FF_2^n$ be the
$n$-dimensional vector space over~$\FF_2$, and $\calF_n$ be the set of Boolean
functions in $n$ variables, that is, the functions from $\FF_2^n$ to~$\FF_2$.
A function $f\in\calF_n$ is uniquely described by its Walsh--Hadamard 
spectrum (spectral function)  
$$
\hatup{f}(\vec{u})=
\sum_{\vec{x}\in\FF_2^n}\chi(f(\vec{x})+\vec{x}\cdot\vec{u}),\quad
\vec{u}\in\FF_2^n.
$$
Here $\chi$ is the non-trivial additive character of $\FF_2$: $\chi(a)=(-1)^a$,
the dot denotes the inner product of vectors.

A spectrum~$\hatup{f}$ satisfies Parseval's identity:
$$
\sum_{\vec{u}\in\FF_2^n}\hatup{f}(\vec{u})^2=2^{2n}.
$$
Due to this identity, $\max_\vec{u}|\hatup{f}(\vec{u})|\geq 2^{n/2}$.
If the lower bound is attained (this is only possible when $n$ is even), 
then~$f$ is called a \emph{bent function}~\cite{Rot76}.
Let $\calB_n$ be the set of bent functions in $n$ variables.

Bent functions are ideal objects in several contexts of coding theory, 
cryptography and combinatorics.
Despite intensive research, bent functions remain difficult to study, 
there are many open problems related to them. 
One of these problems is to estimate the number of bent functions both from 
below and from above.
In~\cite{Agi20} it is proposed to obtain upper bounds by estimating the number 
of continuations of a Boolean function to bent functions.
Next we recall and detail the approach of~\cite{Agi20}.

Let~$k<n$. 
A function $f\in\calF_n$ is a \emph{continuation} of~$g\in\calF_k$ if
$$
g(y_1,\ldots,y_k)=f(\underbrace{0,\ldots,0}_{n-k},y_1,\ldots,y_k). 
$$
In other words, $f$ is a continuation of~$g$ if $g$ is a \emph{restriction} 
of~$f$ to the affine plane $E=\{(0,\ldots,0,y_1,\ldots,y_k)\colon y_i\in\FF_2\}$. 
The actual choice of~$E$ here is unimportant, any other plane of dimension~$k$ 
can be used instead.

Let $\calB_n(g)$ be the set of all functions~$f\in\calB_n$ that continue~$g$. 
To obtain an upper bound on $|\calB_n(g)|$ we use the representation of bent 
functions by bent rectangles. This representation is introduced 
in~\cite{Agi02}. Let us recall it. 

Let $f\in\calF_n$ and $n=m+k$, where $m$ and~$k$ are positive integers. 
Consider all possible restrictions of~$f$ on the planes parallel to~$E$:
$$
f_\vec{u}(\vec{y})=f(\vec{u},\vec{y}),\quad
\vec{u}\in\FF_2^m,\quad
\vec{y}\in\FF_2^k.
$$
Let us pass from the restrictions $f_\vec{u}$ to their spectra $\hatup{f}_\vec{u}$ 
and then construct the function
$$
\boxup{f}(\vec{u},\vec{v})=\hatup{f}_\vec{u}(\vec{v}),\quad
\vec{u}\in\FF_2^m,\quad
\vec{v}\in\FF_2^k.
$$
We call $\boxup{f}$ a \emph{rectangle} of $f$.
By construction, the restrictions $\boxup{f}(\vec{u},\vec{v})$ to $\vec{v}$ 
(\emph{rows}) are spectral functions. If additionally the restrictions  
$\boxup{f}(\vec{u},\vec{v})$ to $\vec{u}$ (\emph{columns}) multiplied by 
$2^{(m-k)/2}$ are also spectral functions, then $\boxup{f}$ 
is called a \emph{bent rectangle}.
In~\cite{Agi02} it is proved that $f$ is bent if and only if $\boxup{f}$ is bent.

In terms of bent rectangles, the problem of estimating the cardinality of 
$\calB_n(g)$ reduces to estimating the number of bent rectangles $\boxup{f}$ 
whose first row is fixed:
$$
\boxup{f}(\vec{0},\vec{v}) = \hatup{g}(\vec{v}).
$$

Note that if $k>n/2$, then there exist functions $g$ that cannot be 
continued: $|\calB_n(g)|=0$. 
An example is a function that takes exactly $2^{k-1}+1$ zero values and 
therefore~$\hatup{g}(\vec{0})=2$.
The normalized column $2^{(m-k)/2}\boxup{f}(\vec{u},\vec{0})$ takes odd or 
fractional value
$$
2^{(m-k)/2}\boxup{f}(\vec{0},\vec{0})=2^{(m-k)/2}\hatup{g}(\vec{0})=2^{(m-k)/2+1}
$$ 
and cannot be a spectral function. 
Hence $\boxup{f}$ cannot be a bent rectangle.

For $k\leq n/2$ the situation changes.

\begin{proposition}\label{Pr.BENT.1}
If~$n$ is even, then for any Boolean function $g$ in $k\leq n/2$ variables
it holds that $|\calB_n(g)|>0$ and
$$
\log_2|\calB_n(g)|\leq {2^n\left(1-\gamma_{2^{n-k}}\right)},
$$
where
$$
\gamma_M=
\frac{\log_2 e+\log_2\pi + \log_2 M - 1}{2M}-\frac{23\log_2 e}{18M^2}.
$$
\end{proposition}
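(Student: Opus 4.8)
The plan is to turn the bent-rectangle reformulation stated above into a product of binomial coefficients and then apply the Theorem to each factor. Put $m=n-k$, so $m\ge k$ and, since $n$ is even, $(m-k)/2$ is a nonnegative integer and $2^{(m-k)/2}$ is a power of two. A bent rectangle $\boxup f$ with first row $\hatup g$ has $2^m$ rows, $2^k$ columns, and for each $\vec v\in\FF_2^k$ the scaled column $2^{(m-k)/2}\boxup f(\cdot,\vec v)$ is the spectrum $\hatup h$ of some $h\in\calF_m$. Evaluating at $\vec u=\vec 0$ forces $\hatup h(\vec 0)=2^{(m-k)/2}\hatup g(\vec v)$, equivalently the weight of $h$ equals $w_{\vec v}=\bigl(2^m-2^{(m-k)/2}\hatup g(\vec v)\bigr)/2$; because $k\le n/2$ and $|\hatup g(\vec v)|\le 2^k$, this $w_{\vec v}$ is a genuine integer in $\{0,1,\dots,2^m\}$. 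A rectangle is determined by its columns, and there are exactly $\binom{2^m}{w_{\vec v}}$ Boolean functions in $m$ variables of weight $w_{\vec v}$, so the number of bent rectangles with first row $\hatup g$ --- hence $|\calB_n(g)|$ --- is at most $\prod_{\vec v\in\FF_2^k}\binom{2^m}{w_{\vec v}}$.

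Next I would feed each factor into the Theorem. Since $w_{\vec v}-2^{m-1}=-2^{(m-k)/2-1}\hatup g(\vec v)$, the quadratic term in the bound becomes $2\bigl(w_{\vec v}-2^{m-1}\bigr)^2/2^m=\hatup g(\vec v)^2/2^{k+1}$, and the Theorem gives
\[
\binom{2^m}{w_{\vec v}}\le\frac{2^{2^m}}{\sqrt{\pi\,2^{m-1}}}\exp\!\left(-\frac{\hatup g(\vec v)^2}{2^{k+1}}+\frac{23}{18\cdot 2^m}\right).
\]
Multiplying over the $2^k$ columns and invoking Parseval's identity $\sum_{\vec v}\hatup g(\vec v)^2=2^{2k}$ (so that $\sum_{\vec v}\hatup g(\vec v)^2/2^{k+1}=2^{k-1}$) yields
\[
|\calB_n(g)|\le\left(\frac{2^{2^m}}{\sqrt{\pi\,2^{m-1}}}\right)^{2^k}\exp\!\left(-2^{k-1}+\frac{23\cdot 2^{k-m}}{18}\right).
\]
It then remains to take $\log_2$ and collect terms: $2^k\cdot 2^m=2^n$ from the powers of two, $-2^{k-1}(\log_2\pi+m-1)$ from the square root, and $-2^{k-1}\log_2 e+\tfrac{23}{18}\,2^{k-m}\log_2 e$ from the exponential; substituting $M=2^m=2^{n-k}$, $m=\log_2 M$ and $n=m+k$ I expect this to collapse exactly to $2^n\bigl(1-\gamma_{2^{n-k}}\bigr)$.

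For the strict inequality $|\calB_n(g)|>0$ it suffices to exhibit one bent continuation of $g$, and a Maiorana--McFarland function does this. Writing $n=2t$ and splitting coordinates as $(\vec x,\vec y)\in\FF_2^t\times\FF_2^t$, the function $f(\vec x,\vec y)=\vec x\cdot\pi(\vec y)+\rho(\vec y)$ is bent for every permutation $\pi$ of $\FF_2^t$ and every $\rho\in\calF_t$. Since $k\le t$, the plane $E$ fixes $\vec x=\vec 0$ together with the first $t-k$ coordinates of $\vec y$, so $f$ restricted to $E$ equals $\rho$ evaluated on the remaining $k$ coordinates; choosing $\rho$ to agree there with $g$ gives $f\in\calB_n(g)$.

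I expect the computation to be essentially routine; the points needing care are the two integrality/range checks --- that $2^{(m-k)/2}$ is an integer and that $w_{\vec v}\in\{0,\dots,2^m\}$, both of which use that $n$ is even and $k\le n/2$ --- and carrying the constant $23/18$ faithfully through the passage to $\log_2$ so that it reproduces the $\gamma_M$ of the statement.
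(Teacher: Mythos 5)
Your proposal is correct and follows essentially the same route as the paper: bound $|\calB_n(g)|$ by the product over columns of the number of weight-constrained functions $\binom{2^m}{w_{\vec v}}$, apply the Theorem to each factor, and collapse the sum of squared spectral values via Parseval's identity; your term-by-term bookkeeping does reproduce $2^n(1-\gamma_{2^{n-k}})$. The only difference is cosmetic: for nonemptiness you exhibit a Maiorana--McFarland continuation, whereas the paper uses the biaffine rectangle $\boxup{f}(\vec u,\vec v)=\hatup{g}(\vec u+\vec v)$ at $k=n/2$; both are valid.
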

\begin{proof}
Let us prove that $|\calB_n(g)|\neq 0$.
It is sufficient to consider the case $k=n/2$. The rectangle
$$
\boxup{f}(\vec{u},\vec{v})=\hatup{g}(\vec{u}+\vec{v}),\quad
\vec{u},\vec{v}\in\FF_2^k,
$$
implements the biaffine construction from~\cite{Agi08} and is therefore bent.
The function~$f$ corresponding to~$\boxup{f}$ is also bent. 
Moreover, the first ($\vec{u}=\vec{0}$) row of $\boxup{f}$ coincides
with~$\hatup{g}$ and $f$ is a continuation of~$g$.
In whole, $f\in\calB_n(g)$ and $\calB_n(g)$ is non-empty.

Let us proceed to find an upper bound on $|\calB_n(g)|$.
We have to estimate the number of bent rectangles $\boxup{f}(\vec{u},\vec{v})$  
such that $\boxup{f}(\vec{0},\vec{v})=\hatup{g}(\vec{v})$. 
Denote $M=2^m$, $K=2^k$, $s_\vec{v}=2^{(m-k)/2}\hatup{g}(\vec{v})$.

Consider the columns of $\boxup{f}$ multiplied by $2^{(m-k)/2}$.
They are spectral functions
$$
\hatup{g}_\vec{v}(\vec{u})=2^{(m-k)/2}\boxup{f}(\vec{u},\vec{v}),\quad
\vec{u}\in\FF_2^m,\quad
\vec{v}\in\FF_2^k,
$$
which correspond to the functions $g_\vec{v}\in\calF_m$. 
According to the restrictions on $\boxup{f}$,
$$
\hatup{g}_\vec{v}(\vec{0})=
2^{(m-k)/2}\boxup{f}(\vec{0},\vec{v})=
2^{(m-k)/2}\hatup{g}(\vec{v})=
s_\vec{v}.
$$

There are $2^M$ ways to choose $g_\vec{v}$ and exactly 
$$
\binom{M}{(M+s_\vec{v})/2}
$$
of them yield the equality $\hatup{g}_\vec{v}(\vec{0})=s_\vec{v}$.
Therefore, the desired number of continuations
(the number of suitable bent rectangles)
$$
|\calB_n(g)|\leq\prod_{\vec{v}\in\FF_2^k}\binom{M}{(M+s_\vec{v})/2}.
$$
Taking the logarithm of both parts of this inequality and using the bound of 
Theorem, we obtain
$$
\log_2|\calB_n(g)|\leq\sum_{\vec{v}\in\FF_2^k}
\left(M-\alpha_M s_\vec{v}^2 - \beta_M\right).
$$
Here
$$
\alpha_M = \frac{\log_2 e}{2M},\quad
\beta_M  = \frac{1}{2}(\log_2\pi + \log_2 M - 1) - \frac{23\log_2 e}{18M}.
$$

Using the equality
$$
\sum_{\vec{v}\in\FF_2^k} s_\vec{v}^2 = 
2^{m-k}\sum_{\vec{v}\in\FF_2^k}\hatup{g}(\vec{v})^2 =
2^{m-k}\cdot 2^{2k} = MK,
$$
we finally get
$$
\log_2|\calB_n(g)|\leq MK(1-\alpha_M -\beta_M/M) = MK(1-\gamma_M).
$$
That was to be proven.
\end{proof}

In the proof, we used the following form of the bound of Theorem:
$$
\binom{M}{(M+s)/2}\leq 2^{M-\alpha_M s^2-\beta_M}.
$$
From the coefficients $\alpha_M$ and~$\beta_M$ we determined the 
quantity~$\gamma_M=\alpha_M+\beta_M/M$.
The larger~$\gamma_M$, the more accurate the estimate of $|\calB_n(g)|$.
It turns out that $\alpha_M$ and~$\beta_M$ can be adjusted so that $\gamma_M$ 
increases, but the bound on the binomial coefficients remains valid. 

For small~$M$, the optimal tuples $(\alpha_M,\beta_M,\gamma_M)$ can be found by 
solving the linear programming problem. The solutions are presented in the table below.
The quantities $\gamma_M$ in the last column of the table can be used in 
Proposition~\ref{Pr.BENT.1} instead of those specified there.

$$
\begin{array}{|c|c|c|c|}
\hline
M  &  \alpha_M & \beta_M & \gamma_M\\
\hline
\hline
2  &  1/2 & 1 & 3/4\\
4  &  1/6 & 4/3 & 1/2\\
8  &  1/12 & 14/3-\log_2 7 & 
   2/3-(\log_2 7)/8\approx 0.3157\\
\hline
\end{array}
$$

\section{Latin dependencies}\label{LATIN}

Let~$\ZZ^N$ be the set of $N$-tuples of integers, 
$\Omega$ be a finite subset of $\ZZ^N$,
and $p$ be a probability distribution over $\Omega$.
Let $\vec{a}=(a_1,\ldots,a_N)$ be a random tuple of~$\Omega$ following~$p$ and
let $p_i$ be the marginal distribution of the $i$th coordinate of the tuple:
$p_i(x)=\Pr{a_i=x}$, $i=1,\ldots,N$.

The degree of dependence between coordinates of $\vec{a}$ can be estimated as follows.
\begin{enumerate}
\item
Choose tuples $\vec{a}^1,\ldots,\vec{a}^N$ independently at random according to 
the distribution~$p$.
\item
Construct a tuple $\vec{b}=(b_1,\ldots,b_N)$ in which $b_i$ is the $i$th 
coordinate of $\vec{a}^i$. 
\item
Determine the degree of dependence: $L(p) = \Pr{\vec{b}\in\Omega}$.
\end{enumerate}


It is convenient to imagine that the tuples $\vec{a}^i$ form the rows of a 
matrix of order~$N$ and then $\vec{b}$ is the diagonal of the matrix.
The probability $\Pr{\vec{b}\in\Omega}$ characterizes the fulfillment of a 
constraint on the diagonal provided that all rows satisfy this constraint.
Similar constraints (on rows, columns, sometimes on diagonals) are imposed in 
Latin squares. That is why we call the quantity $L(p)$ the degree of the 
\emph{Latin} dependence.

The quantity~$L(p)$ is the probability of a successful ``assembly'' of an 
element of~$\Omega$ from ``scattered'' coordinates with the 
distributions~$p_1,\ldots,p_N$.
As the dependence between the coordinates of~$\vec{a}$ increases, we should 
expect a decrease in the probability~$L(p)$.
The maximum value of $L(p)=1$ is achieved when the coordinates of~$\vec{a}$ 
are independent.

The degree of the Latin dependence can be computed as follows:
$$
L(p)=\sum_{(b_1,\ldots, b_N)\in\Omega}\prod_{i=1}^N p_i(b_i).
$$

\begin{example}
Let $\Omega$ consist of permutations of the numbers from $1$ to $N$ 
and let $p$ be the uniform distribution over~$\Omega$.
Then $p_i(x) = 1 / N$ if $x\in\{1,\ldots,N\}$ and $p_i(x)=0$ otherwise.
Thus,
$$
L(p)=\frac{N!}{N^N} \approx \frac{\sqrt{2\pi N}}{e^N}.
$$
We can speak of the \emph{exponential} dependence meaning that $1/L(p)$ 
grows exponentially with $N$.
\end{example}

\begin{example}
Let $N$ be even and $p$ assign the probability $1/\binom{N}{N/2}$ 
to each of $(0,1)$-tuples of length $N$ with exactly $N/2$ units.
Then~$p_i(x)=1/2$ for $x\in\{0,1\}$ and
$$
L(p)=\binom{N}{N/2} 2^{-N}\approx \sqrt{\frac{2}{\pi N}}.
$$
We can speak of the \emph{power} dependence or, more precisely, ``\emph{square root}'' 
dependence.
\end{example}

Let us show how to use Theorem to estimate the degree of the Latin dependence 
in the Walsh--Hadamard spectra (see Section~\ref{BENT}). 

\begin{proposition}\label{Pr.LATIN.1}
Let $\Omega$ consist of tuples of values of spectral functions $\hatup{f}$ 
corresponding to all possible $f\in\calF_n$.
Let~$p$ be the uniform distribution over~$\Omega$.
Then
$$
L(p)\leq
\exp\left(\frac{23}{18}\right)
\left(\frac{8}{\pi e N}\right)^{N/2},\quad
N=2^n.
$$
\end{proposition}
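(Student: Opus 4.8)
The plan is to make the marginal distributions $p_i$ explicit, notice that every one of them is a copy of the symmetric binomial distribution, and then bound $L(p)=\sum_{\vec{b}\in\Omega}\prod_i p_i(b_i)$ term by term using Theorem together with Parseval's identity — exactly in the spirit of the computation behind Proposition~\ref{Pr.BENT.1}. First I would record the structure of $\Omega$. Since the Walsh--Hadamard transform $f\mapsto\hatup{f}$ is a bijection of $\calF_n$, distinct functions have distinct spectra, so $|\Omega|=2^{2^n}=2^N$ and $p$ puts mass $2^{-N}$ on each spectrum. It is convenient to index the $N=2^n$ coordinates by the vectors $\vec{u}\in\FF_2^n$ and write $p_\vec{u}$ for the marginal at coordinate $\vec{u}$. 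For a uniformly random $f\in\calF_n$ and a fixed $\vec{u}$, the substitution $f\mapsto f+\vec{x}\cdot\vec{u}$ is a bijection of $\calF_n$, so $\hatup{f}(\vec{u})=\sum_\vec{x}\chi(f(\vec{x})+\vec{x}\cdot\vec{u})$ is distributed as $\sum_\vec{x}\chi(f(\vec{x}))=N-2w$, where $w$ — the number of ones of a uniformly random Boolean function — is binomial with parameters $N$ and $1/2$. Hence $p_\vec{u}(s)=\binom{N}{(N+s)/2}\,2^{-N}$ for $s\equiv N\pmod 2$, $|s|\le N$, the same for every coordinate.

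Next I would apply Theorem. With $k=(N+s)/2$ one has $k-N/2=s/2$, so Theorem gives $p_\vec{u}(s)\le\sqrt{2/(\pi N)}\,\exp\bigl(-s^2/(2N)+23/(18N)\bigr)$. Consequently, for any tuple $\vec{b}=(b_\vec{u})_{\vec{u}\in\FF_2^n}\in\Omega$,
$$
\prod_{\vec{u}\in\FF_2^n} p_\vec{u}(b_\vec{u})\le
\Bigl(\frac{2}{\pi N}\Bigr)^{N/2}
\exp\Bigl(-\frac{1}{2N}\sum_{\vec{u}} b_\vec{u}^2+\frac{23}{18}\Bigr).
$$
The key point is that $\sum_{\vec{u}} b_\vec{u}^2=2^{2n}=N^2$ by Parseval's identity, which holds for \emph{every} element of $\Omega$; therefore the right-hand side equals the constant $(2/(\pi N))^{N/2}\exp(-N/2+23/18)$, uniformly over $\Omega$.

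Finally, summing over $\Omega$,
$$
L(p)=\sum_{\vec{b}\in\Omega}\prod_{\vec{u}} p_\vec{u}(b_\vec{u})
\le |\Omega|\cdot\Bigl(\frac{2}{\pi N}\Bigr)^{N/2}e^{-N/2}e^{23/18}
= 2^N\Bigl(\frac{2}{\pi N}\Bigr)^{N/2}e^{-N/2}e^{23/18},
$$
and collecting exponents via $2^N=4^{N/2}$ and $e^{-N/2}=(1/e)^{N/2}$ turns this into $e^{23/18}\,(8/(\pi e N))^{N/2}$, which is the asserted bound since $e^{23/18}=\exp(23/18)$. There is no serious obstacle: the only things to get right are the identification of the marginals with the symmetric binomial distribution — the bijection $f\mapsto f+\vec{x}\cdot\vec{u}$ is what makes them coordinate-independent — and the observation that Parseval's identity pins $\sum_{\vec{u}}b_\vec{u}^2$ to $N^2$ on the whole of $\Omega$, so that the per-tuple product bound is a single constant; everything after that is substitution into Theorem and tracking powers of $2$ and $e$.
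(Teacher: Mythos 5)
Your proof is correct and follows essentially the same route as the paper: identify each marginal as the symmetric binomial law, apply Theorem coordinatewise, use Parseval's identity to fix $\sum_{\vec{u}} b_{\vec{u}}^2=N^2$ so the per-tuple bound is a constant, and multiply by $|\Omega|=2^N$. The only addition is your explicit justification (via the bijection $f\mapsto f+\vec{x}\cdot\vec{u}$) that the marginals are coordinate-independent, which the paper asserts without comment.
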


\begin{proof}
Since there are $2^N$ functions $f$ and the mapping $f\mapsto\hatup{f}$ is bijective, 
$|\Omega|=2^N$.
Elements of $\Omega$ are $N$-tuples of even numbers bounded in magnitude by~$N$. 
Marginal distributions of coordinates of the tuples:
$$
p_i(x)=2^{-N}\binom{N}{(N+x)/2},\quad
x\in\{-N,-N+2,\ldots,N\}.
$$
The degree of dependence between coordinates:
$$
L(p)=\sum_{(b_1,\ldots,b_N)\in\Omega}\prod_{i=1}^N 2^{-N}\binom{N}{(N+b_i)/2}.
$$
Note that due to Parseval's identity, $\sum_i b_i^2 = N^2$.

Applying Theorem, we obtain
\begin{align*}
L(p)
&
\leq 2^N \max_{(b_1,\ldots,b_N)\in\Omega}
\prod_{i=1}^N\sqrt{\frac{2}{\pi N}}\exp\left(-\frac{b_i^2}{2N}+\frac{23}{18N}\right)=\\
& =
2^N \left(\frac{2}{\pi N}\right)^{N/2}
\exp\left(-\frac{N}{2}+\frac{23}{18}\right)
=
\exp\left(\frac{23}{18}\right)
\left(\frac{8}{\pi e N}\right)^{N/2}.
\end{align*}
That was to be proven.
\end{proof}

As we see, the degree of dependence in the Walsh--Hadamard spectra 
is asymptotically higher than in the permutations. In the case of spectra, 
we can speak of the \emph{factorial} dependence.

\section{Representations as sum of squares}\label{SQUARES}

Let $r_{s,n}(N)$ be a number of representations of a positive integer $N$ 
as sum of squares of~$s$ integers, each bounded in magnitude by~$n$:
$$
r_{s,n}(N)=\left|\left\{(a_1,\ldots,a_s)\in\ZZ^s\colon
\textstyle\sum_{i=1}^s a_i^2=N,\ 
|a_i|\leq n\right\}\right|. 
$$

The function $r_{s,n}(N)$ and especially the function $r_s(N)$, in which the 
restrictions on $|a_i|$ are removed, have long been studied in number theory.
One of the results is the following asymptotic integral bound on $r_s(N)$:
$$
\sum_{N=1}^R r_s(N)=
\frac{(\pi R)^{s/2}}{\Gamma(s/2+1)}+O(R^{(s-1)/2}),\quad
R\to\infty
$$
(see for example~\cite{Tak18}).
For $s=2$, this is known as Gauss' circle theorem.

Using our Theorem, we obtain an integral bound on $r_{s,n}(N)$.

\begin{proposition}\label{Pr.SQUARES.1}
It holds that
$$
\sum_{N=0}^{sn^2}
r_{s,n}(N)\exp\left(-\frac{N}{n}\right)
\geq
(\pi n)^{s/2}
\exp\left(-\frac{23 s}{36 n}\right).
$$
\end{proposition}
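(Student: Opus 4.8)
The plan is to rewrite the left-hand side as an $s$-th power of a one-dimensional sum, and then to derive the required lower bound on that sum directly from the Theorem applied with parameter~$2n$.

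First I would group the tuples $(a_1,\ldots,a_s)$ by the value of $N=a_1^2+\cdots+a_s^2$. This gives
$$
\sum_{N=0}^{sn^2} r_{s,n}(N)\exp\left(-\frac{N}{n}\right)
=\sum_{\substack{(a_1,\ldots,a_s)\in\ZZ^s\\ |a_i|\leq n}}\exp\left(-\frac{a_1^2+\cdots+a_s^2}{n}\right)
=\left(\sum_{a=-n}^{n}\exp\left(-\frac{a^2}{n}\right)\right)^{s}.
$$
Hence it suffices to prove the single-variable inequality
$$
\sum_{a=-n}^{n}\exp\left(-\frac{a^2}{n}\right)\geq\sqrt{\pi n}\,\exp\left(-\frac{23}{36n}\right),
$$
after which raising both (positive) sides to the $s$-th power yields the claim.

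To obtain this inequality I would apply the Theorem to the positive integer~$2n$: for each $k\in\{0,1,\ldots,2n\}$,
$$
\binom{2n}{k}\leq\frac{2^{2n}}{\sqrt{\pi n}}\exp\left(-\frac{(k-n)^2}{n}+\frac{23}{36n}\right).
$$
Summing over~$k$, using $\sum_{k=0}^{2n}\binom{2n}{k}=2^{2n}$, and dividing by~$2^{2n}$ gives
$$
1\leq\frac{1}{\sqrt{\pi n}}\exp\left(\frac{23}{36n}\right)\sum_{k=0}^{2n}\exp\left(-\frac{(k-n)^2}{n}\right).
$$
The substitution $a=k-n$ turns the last sum into $\sum_{a=-n}^{n}\exp(-a^2/n)$, and rearranging gives exactly the single-variable inequality above.

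Since the argument is essentially a direct substitution into the Theorem, I do not expect a serious obstacle. The only point requiring care is the choice of parameter: one must invoke the bound at~$2n$, not at~$n$, because it is precisely for the argument~$2n$ that the Gaussian exponent $2(k-n)^2/(2n)$ reduces to $(k-n)^2/n$ and the prefactor $\sqrt{\pi\cdot 2n/2}$ becomes $\sqrt{\pi n}$, matching the shape of the one-dimensional sum; any other rescaling would leave the constants misaligned.
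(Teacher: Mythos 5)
Your proof is correct and is essentially the paper's argument in a different dressing: the paper phrases it probabilistically (writing $1=\sum\Pr\{(\xi_1,\ldots,\xi_s)=(2a_1,\ldots,2a_s)\}$ for $s$ independent sums of $2n$ signs and bounding each factor $2^{-2n}\binom{2n}{n+a_i}$ by the Theorem at parameter $2n$), whereas you factor the $s$-dimensional sum into the $s$-th power of a one-dimensional sum and then invoke the same normalization $\sum_k\binom{2n}{k}=2^{2n}$ together with the same instance of the Theorem. Both routes rest on exactly the two same ingredients, so your version is a valid, slightly cleaner reorganization rather than a genuinely different proof.
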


\begin{proof}
A tuple $(a_1,\ldots,a_s)$ is an integral point of the $s$-dimensional 
real space $\RR^s$. 
The point lies within $s$-dimensional cube with side~$2n$ and simultaneously 
on a circle of radius~$\sqrt{a_1^2+\ldots+a_s^2}$ centered at the origin.
Any point within the cube lies on a circle of some radius and the square of 
this radius is a non-negative integer not exceeding $sn^2$. 
Let us use this observation. 

Let $\xi_1,\ldots,\xi_s$ be random variables obtained by summing 
$2n$ independent random variables taking the values $1$ and $-1$
with equal probability.
Then for the points $(a_1,\ldots,a_s)$ within the cube it holds that 
$$
\Pr{(\xi_1,\ldots,\xi_s)=(2a_1,\ldots,2a_s)}
=\prod_{i=1}^s 2^{-2n}\binom{2n}{n+a_i}
\leq
\frac{1}{(\pi n)^{s/2}}\exp\left(-\frac{N}{n}+\frac{23 s}{36 n}\right).
$$
Here $N=\sum_i a_i^2$ is a square of a radius of a circle containing 
the point $(a_1,\ldots,a_s)$.

We have
\begin{align*}
1 &= \sum_{-n\leq a_1,\ldots,a_s\leq n}
\Pr{(\xi_1,\ldots,\xi_s)=(2a_1,\ldots,2a_s)}=\\
&=\sum_{N=0}^{sn^2}\,\sum_{\genfrac{}{}{0pt}{1}
{-n\leq a_1,\ldots,a_s\leq n}{\sum a_i^2 = N}}
\Pr{(\xi_1,\ldots,\xi_s)=(2a_1,\ldots,2a_s)}\leq\\
&
\leq 
\sum_{N=0}^{sn^2}
r_{s,n}(N)
\frac{1}{(\pi n)^{s/2}}\exp\left(-\frac{N}{n}+\frac{23 s}{36 n}\right),
\end{align*}
from which the desired result follows.
\end{proof}

\section{Proof of Theorem}\label{PROOF}

\begin{lemma*}\label{L.PROOF.1}
For a positive integer~$n$ and $k=0,\pm 1,\ldots,\pm n$, it holds that
$$
\binom{2n}{n+k}\leq\frac{2^{2n}}{\sqrt{\pi n}}
\exp\left(-\frac{k^2}{n}+\frac{23}{36n}\right).
$$
\end{lemma*}
\begin{proof}
The bound obviously holds for $k=\pm n$.
For $n=1,2$, the bound is verified by direct calculations.
Since the binomial coefficients $\binom{2n}{n+k}$ and $\binom{2n}{n-k}$ 
coincide, it remains to deal with the case where $n\geq 3$ and $0\leq k<n$.

For this case, as shown in~\cite{Sza16},
$$
\log\left(\binom{2n}{n+k}2^{-2n}\right)\leq 
\log\frac{1}{\sqrt{\pi n}} - b_{k,n} - \frac{1}{9n},
$$
where
$$
b_{k,n}=n\left(
\left(1+\frac{k+1/2}{n}\right)\log\left(1+\frac{k}{n}\right)+
\left(1-\frac{k-1/2}{n}\right)\log\left(1-\frac{k}{n}\right)
\right).
$$
To complete the proof, we show that $b_{k,n}>k^2/n-c/n$, $c=23/36+1/9=3/4$.

Let us consider the function $f(k)=b_{k,n}-k^2/n$. 
In~\cite{Sza16}, it is represented as follows:
$$
f(k)=
-\frac{k^2}{2n^2}+
\frac{k^4}{2n^3}\left(\frac{1}{3}-\frac{1}{2n}\right)+
\frac{k^6}{2n^5}\left(\frac{1}{5}-\frac{1}{2n}\right)+\ldots.
$$
Let $k_0=\sqrt{3n/2}$. In the region $k\leq k_0$, the inequality
$$
f(k)>-\frac{k^2}{2n^2}\geq -\frac{k_0^2}{2n^2}=-\frac{c}{n}
$$
holds. 
Now it is sufficient to prove that $f(k)$, viewed as a real-variable function, 
increases for $k\in[k_0,n-1]$.

Denote $x=k/n$ and take the derivative:
\begin{align*}
f'(k)&=2\arctanh\left(\frac{k}{n}\right)-\frac{2k}{n}+\frac{k}{k^2-n^2}=\\
&=2\arctanh x-2x-\frac{x}{2n(1-x^2)}=\\
&=\frac{2x^3}{3}+\frac{2x^5}{5}+\frac{2x^7}{7}+\ldots-\frac{x}{2n(1-x^2)}>\\
&>\frac{2x^3}{3}+\frac{2x^5}{5}-\frac{x}{2n(1-x^2)}.
\end{align*}

The derivative of the last expression has the form:
$$
\frac{(1+x^2)(4n x^2(1-x^2)^2-1)}{2n(1-x^2)^2}.
$$
It is positive in the region $x\in[k_0/n,(n-1)/n]$ and therefore 
$f'(k)$ increases for $k\in[k_0,n-1]$. 
%
%
Moreover,
$$
f'(k_0)>\frac{2x_0^3}{3}-\frac{x_0}{2n(1-x_0^2)}=
x_0\left(\frac{1}{n}-\frac{1}{2n(1-3/(2n))}\right)\geq 0
$$
(we take into account that $n\geq 3$) and therefore $f(k)$ also increases.
\end{proof}

Let us now turn to the proof of Theorem.
It is sufficient to consider the case of odd $n=2m-1$ and $k\leq m$.
In this case,
\begin{align*}
\binom{2m-1}{k}&=\frac{2m-k}{2m}\binom{2m}{k}\leq\\
&\leq\left(1-\frac{k}{2m}\right)\frac{2^{2m}}{\sqrt{\pi m}}
\exp\left(-\frac{(k-m)^2}{m}+\frac{23}{36m}\right)\leq\\
&\leq\frac{2^{2m-1}}{\sqrt{\pi (m-1/2)}}
\exp\left(-\frac{(k-m+1/2)^2}{m-1/2}+\frac{23}{36(m-1/2)}+t(k)\right),
\end{align*}
where
\begin{align*}
t(k)&=-\frac{(k-m)^2}{m}+
  \frac{(k-m+1/2)^2}{m-1/2}+\log 2 + \log\left(1-\frac{k}{2m}\right)=\\
&=\log\left(2-\frac{k}{m}\right)-\frac{2m^2-m-2k^2}{2m(2m-1)}.
\end{align*}

To complete the prove, we show that $t(k)\geq 0$ for $k\in[0;m]$.

The derivative 
$$
t'(k)=\frac{2k}{2m^2-m}-\frac{1}{2m-k}=\frac{2(m-k)^2-m}{(2m^2-m)(2m-k)}.
$$
Therefore, the minimum of $t(k)$ is reached at the point $k_0=m-\sqrt{m/2}$, 
and this minimum is
$$
t(k_0)=\log\left(1+\frac{1}{\sqrt{2m}}\right)-\frac{1}{1+\sqrt{2m}}.
$$
Denoting $x=\sqrt{2m}$, we obtain
$$
t(k_0)\geq \frac{1}{x}-\frac{1}{2x^2}-\frac{1}{1+x}=\frac{x-1}{2x^2(x+1)}>0.
$$
That was to be shown.

\printbibliography

\end{sloppypar}
\end{document}